\newcommand{\arxiv}[1]{\href{http://arxiv.org/abs/#1}{{\tt arXiv:#1}}}
\font\co=lcircle10
\def\boxcross{\ \smash{\lower6.5pt\hbox{\rlap{\hskip4.5pt\vrule height13.5pt}}
                \raise0pt\hbox{\rlap{\hskip-2pt \vrule height.4pt depth0pt
                        width13.5pt}}}\hskip12.7pt}
\def\boxelbow{\ \hskip.1pt\smash{%
               \hbox{\co \hskip 5.5pt\rlap{\mathsurround=0pt\rlap{\mathsurround=0pt\char'006}\lower0.4pt\rlap{\char'004}}
                \lower6.5pt\rlap{\hskip-0.2pt\vrule height3pt}
                \raise3.5pt\rlap{\hskip-0.2pt\vrule height3.2pt}}
                \hbox{%
                  \rlap{\hskip-6.4pt \vrule height.4pt depth0pt
width2.5pt}%
                  \rlap{\hskip4.05pt \vrule height.4pt depth0pt
width3.1pt}}}
                \hskip8.7pt}
\newtheorem{theorem}{Theorem}[section]
\newtheorem{Theorem}[theorem]{Theorem}
\newtheorem*{thm*}{Theorem}
\newtheorem{Lemma}[theorem]{Lemma}
\newtheorem{Proposition}[theorem]{Proposition}
\newtheorem*{Conjecture*}{Conjecture}
\theoremstyle{definition}
\theoremstyle{remark}
\newtheorem{remark}[theorem]{Remark}
\newcommand\onto{\twoheadrightarrow}
\newcommand{\ZZ}{\mathbb{Z}}
\newcommand{\PP}{\mathbb{P}}
\newcommand\defn[1]{{\bf #1}}
\renewcommand\AA{{\mathbb A}}
\newcommand\into{\hookrightarrow}
\newcommand\lie[1]{{\mathfrak #1}}
\newcommand\union{\cup}
\newcommand\Union{\bigcup}
\newcommand\dom{{\backslash}}
\newcommand\iso{\mathrel{\cong}}
\newcommand\calY{{\mathcal Y}}
\newcommand\rank{{\mathrm rank}}
\newcommand{\mapsfrom}{\mathrel{\reflectbox{\ensuremath{\mapsto}}}}
\begin{document}

\title[A Bruhat atlas for the Mehta--van der Kallen stratification
  of $T^* GL_n/B$]{A Bruhat atlas for the\\ Mehta--van der Kallen stratification
  of $T^* GL_n/B$}
\author{Allen Knutson}
\address{Department of Mathematics, Cornell University, Ithaca, NY}
\email{\href{mailto:allenk@math.cornell.edu}{allenk@math.cornell.edu}}
\urladdr{\url{http://math.cornell.edu/~allenk/}}
\thanks{AK was partially supported by NSF DMS-1700372.}

\author{Steven V Sam}
\address{Department of Mathematics, University of California, San Diego, CA}
\email{\href{mailto:ssam@ucsd.edu}{ssam@ucsd.edu}}
\urladdr{\url{http://math.ucsd.edu/~ssam/}}
\thanks{SS was partially supported by NSF DMS-1812462.}

\date{October 11, 2021}

\begin{abstract}
  Mehta and van der Kallen put a Frobenius splitting on
  the type A cotangent bundle $T^* GL_n/B$, thereby defining a
  stratification by compatibly split subvarieties,
  and they determined a few of the elements of this stratification.
  We embed $T^* GL_n/B$ as a stratum in a larger stratified
  (and Frobenius split) space $GL_n/B \times Mat_n$
  whose stratification we determine,
  thereby giving a full description of the one of Mehta--van der Kallen.
  The main technique is to endow $GL_n/B \times Mat_n$ with
  a {\em Bruhat atlas}, covering it with open sets that are 
  stratified-isomorphic to Bruhat cells (in $GL_{2n}/B_{2n}$).
  Among the consequences are that each stratum closure is
  normal and Cohen--Macaulay.
\end{abstract}

\maketitle

\setcounter{tocdepth}{1}
{\footnotesize \tableofcontents}

\section{Statement of results}\label{sec:intro}
 
The \defn{Bruhat decomposition} of the flag manifold $Fl(n)$
is induced from the $n^2$ statistics
$$ F^\bullet \mapsto \dim (F^j \cap E^i),\quad i,j\in [n] $$
where $(E^i)_{i\in [n]}$ (respectively $(E_i)_{i \in [n]}$)
denotes the standard flag on affine space $\AA^n$,
invariant under the upper triangular matrices $B = B_n \leq GL_n$
(respectively the anti-standard flag, invariant under the
lower triangular matrices $B^-$).
If we identify $Fl(n) \iso GL_n/B$ as usual, we can collect these
statistics into the single map
$$ gB/B\quad\mapsto\quad B g B\quad \in B\backslash GL_n/B \iso S_n $$
which takes the closure relation on strata to the (opposite)
Bruhat order on $S_n$.

In this paper we introduce and study a similar story on $Fl(n) \times Mat_n$,
where $Mat_n$ denotes the space of $n \times n$ matrices:

\begin{Proposition}\label{prop:package}
  The map
  \begin{eqnarray*}
    v \colon Fl(n) \times Mat_n &\to& B^-_{2n} \backslash GL_{2n} / B_{2n} \iso S_{2n} \\
    (gB/B, X) &\mapsto& 
  B^-_{2n} \begin{bmatrix}  X & g \\ w_0 g^{-1} & 0\end{bmatrix} B_{2n}
  \end{eqnarray*}
  is well-defined (independent of the choice of lift of $gB/B$ to $g$),
  and both determines and is determined by the $4n^2$ statistics

  $    \rank(E^i \into \AA^n \stackrel{X}{\to} \AA^n \onto \AA^n/E_j), \hfill
\rank(\AA^n \xrightarrow{X} \AA^n \onto \AA^n/(E_{n-i} + F^j)) - \dim (E_{n-i} \cap F^j), \quad$

$
    \rank(F^{n-i} \cap E^j \into \AA^n \xrightarrow{X} \AA^n) -  \dim (E^j \cap F^{n-i}), \hfill
    \rank(F^i \into \AA^n \stackrel{X}{\to} \AA^n \onto \AA^n/F^j).
\quad$
\end{Proposition}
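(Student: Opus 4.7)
The plan is to first verify that $v$ is well-defined, and then to show that the double coset $v(gB/B, X)$ and the listed $4n^2$ statistics mutually determine one another. For well-definedness I would exhibit, for each $b \in B_n$, block-diagonal elements $L \in B^-_{2n}$ and $R \in B_{2n}$ conjugating the matrix $M = \begin{bmatrix} X & g \\ w_0 g^{-1} & 0 \end{bmatrix}$ to $M' = \begin{bmatrix} X & gb \\ w_0 b^{-1} g^{-1} & 0 \end{bmatrix}$. The natural choice is $L = \mathrm{diag}(I_n,\, w_0 b^{-1} w_0^{-1})$ and $R = \mathrm{diag}(I_n,\, b)$: here $L \in B^-_{2n}$ because $w_0 B_n w_0^{-1} = B^-_n$, and a direct block multiplication yields $LMR = M'$, showing $B^-_{2n} M B_{2n} = B^-_{2n} M' B_{2n}$.

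For the equivalence with the statistics, I would invoke the standard Bruhat--Ehresmann classification: the double coset of $M \in GL_{2n}$ in $B^-_{2n} \backslash GL_{2n} / B_{2n}$ is determined by its northwest rank function $r(i,j) := \rank M[\{1,\ldots,i\},\{1,\ldots,j\}]$, each entry being invariant under left multiplication by $B^-_{2n}$ (preserving the span of the first $i$ rows) and right multiplication by $B_{2n}$ (preserving the span of the first $j$ columns). So it suffices to compute each $r(i,j)$ as a function of $(g,X)$ and match it to one of the four listed families up to an additive constant depending only on $i,j,n$.

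This computation splits into four cases reflecting the $2\times 2$ block structure of $M$. When $i,j \leq n$, the submatrix is simply $X[\{1,\ldots,i\},\{1,\ldots,j\}]$, whose rank is $\rank(E^j \into \AA^n \xrightarrow{X} \AA^n \onto \AA^n/E_{n-i})$, matching family (a). When $i \leq n < j$, the submatrix picks up a $g$-block; expanding $F^{j-n} = g(E^{j-n})$ yields rank $= \dim(X(\AA^n) + F^{j-n} + E_{n-i}) - (n-i)$, which differs from family (b) by the constant $j-n$. When $j \leq n < i$, using $w_0 E^k = E_k$ one shows the $w_0 g^{-1}$ block vanishes on precisely $F^{2n-i}$, so the kernel of the combined map on $E^j$ is $E^j \cap F^{2n-i} \cap \ker X$, giving rank $= j - \dim(E^j \cap F^{2n-i} \cap \ker X)$, matching family (c). When $i,j > n$, a similar kernel analysis yields rank $= j - \dim(F^{2n-i} \cap X^{-1}(F^{j-n}))$, matching family (d). The reindexing in each case is invertible, so the rank function and the four statistic families carry the same data.

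The main obstacle will be bookkeeping: keeping the flag conventions $E^\bullet$, $E_\bullet$, $F^\bullet$, the index shifts between $i$, $n-i$, and $2n-i$, and the ``first $k$'' versus ``last $k$'' distinctions consistent across all four cases. The identity $w_0 E^k = E_k$ is the pivotal algebraic fact linking the $X$-block (naturally indexed by $E^\bullet$ and $E_\bullet$) to the off-diagonal $w_0 g^{-1}$ block (naturally indexed by $F^\bullet$); the need to juggle two different flag ``bases'' in the same rank computation is what produces the apparent complexity of families (b) and (c).
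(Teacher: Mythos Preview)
Your proposal is correct and follows essentially the same approach as the paper: both identify the double coset via the northwest rank function and compute it block by block across the four quadrants. The paper leaves well-definedness implicit (it follows once the ranks are seen to depend only on $F^\bullet$), whereas you give the explicit $L,R$ conjugation; for the SE quadrant the paper uses row/column operations to expose $g^{-1}Xg$ while you compute the kernel directly, but these are minor tactical variations on the same argument.
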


Let $\calY$ be the poset of {\em closures} of the level sets of $v$.
Proposition \ref{prop:package} implies that $\calY$ 
bears a ranked embedding $\calY^{op} \into S_{2n}$, again called $v$.
Our motivating example of a $Y\in \calY$ is defined by the conditions
$$\rank(F^i \into \AA^n \stackrel{X}{\to} \AA^n \onto \AA^n/F^{i-1}) = 0, \qquad
i\in [n],$$
which are equivalent to $X(F^i) \leq F^{i-1}$. This is the ubiquitous
\defn{Springer space} $Y_{Spr}$ of type $A$, isomorphic to $T^* GL_n/B$,
whose projection $(F^\bullet,X)\mapsto X$ is the Springer resolution
of the nilpotent cone. Another motivating example is the
\defn{Grothendieck--Springer family} $Y_{GS}$ defined by $X(F^i) \leq F^i$.
Each of these inherit stratifications, which are described at
the end of Proposition \ref{prop:G-invt} below.

\subsection*{Example: $n=2$}
Below is drawn the poset $\calY$ of strata inside $\PP^1 \times Mat_2$,
indexed by their elements in $S_{2n} = S_4$. We have grouped them
according to their projections to $Mat_2$ (which are automatically
compatibly Frobenius split therein), whose equations are written
in the grouping;
additional equations involving $F^1 \in \PP^1$ are indicated
by the superscripts, legend at right.
We have drawn in blue only the Hasse diagram edges for those projections;
the full partial order is just the restriction of $S_4$ Bruhat order.

\begin{tikzpicture}
  \node at (-2.5,7) {$\underline{\dim}$};
  \node at (-2.5,6.5) {0};  \draw[color=teal] (-2.2,5.95) -- (10,5.95);
  \node at (-2.5,4.9) {1};  \draw[color=teal] (-2.2,3.8) -- (10,3.8);  
  \node at (-2.5,2.7) {2};  \draw[color=teal] (-2.2,1.5) -- (10,1.5);  
  \node at (-2.5,.5) {3};  \draw[color=teal] (-2.2,-.7) -- (10,-.7);
  \node at (-2.5,-1.4) {4};  \draw[color=teal] (-2.2,-2) -- (10,-2);    
  \node at (-2.5,-2.5) {5};
  \node[draw, align=center, rounded corners=.2cm] (A) at (4,6.2)
  {$3421^e$ \quad $4312^d$\\ $X=0$\\ 3412};
  \node[draw, align=center, rounded corners=.2cm] (B) at (0,5) 
  {$2431^{e}$\\ \\$x_{11}=x_{21} = x_{22}= 0$};
  \node[draw, align=center, rounded corners=.2cm] (C) at (4,4) 
  {$3241^{e}$\quad $4213^d$\\ $x_{21} = x_{12} = x_{11} = 0$ \smallskip \\ 3214};
  \node[draw, align=center, rounded corners=.2cm] (D) at (8,5) 
  {$4132^{d}$\\ \\$x_{11}=x_{12} = x_{22} = 0$};  
  \node[draw, align=center, rounded corners=.2cm] (E) at (0,1.5)
  {$2341^e$\quad $2413^b$\\ \\ 2314\\$x_{11}=x_{21} = 0$};
  \node[draw, align=center, rounded corners=.2cm] (F) at (4,2.3) 
  {$1432^{bc}$\\ $\det X={\rm tr}(X) = 0$};
  \node[draw, align=center, rounded corners=.2cm] (G) at (8,1.5)
  {$3142^c$\quad $4123^d$\\ \\ 3124\\$x_{11}=x_{12} = 0$};
  \node[draw, align=center, rounded corners=.2cm] (H) at (0,-.8) 
  {$1423^b$\quad $1342^c$\\ \\ 1324\\$\det X = 0$};
  \node[draw, align=center, rounded corners=.2cm] (I) at (8,-.8) 
  {$2143^a$\\ \\ 2134\\ $x_{11}=0$};
  \node[draw, align=center, rounded corners=.2cm] (J) at (4, -1.8) 
  {$1243^a$\\ \\ 1234};
  \draw[color=blue, thick] (A) -- (B) -- (E) -- (H) -- (J);
  \draw[color=blue, thick] ($(B)+(.8,-.84)$) -- ($(F)+(-1.73,0)$);
  \draw[color=blue, thick] (A) -- (C) -- (E);
  \draw[color=blue, thick] ($ (E) + (1.25,-1.05) $) -- (I);
  \draw[color=blue, thick] (A) -- (D) -- (G) -- (I) -- (J);
  \draw[color=blue, thick] ($(D) + (-.8,-.85)$) -- ($(F)+(1.73,0)$);
  \draw[color=blue, thick] (F) -- ($(H)+(1.35,.5)$);
  \draw[color=blue, thick] (H) -- ($(G)+(-1,-1.1)$);
  \draw[color=blue, thick] (C) -- (G);
  \node[
  align=left, text width=2.7cm, rounded corners=.2cm] at (11.8,2.5)
  {\centerline{ \em Legend }
    \hrule \vskip .1in
    \begin{enumerate}[a.]
    \item $XF^1 \leq F^1$ \\ 
    \item $im(X) \leq F^1 $\\ 
    \item $XF^1 = 0$ \\
    \item $F^1 = E_1$ \\ 
    \item $F^1 = E^1 $ 
    \end{enumerate}};
\end{tikzpicture}

For those familiar with the interplay of stratifications
under the projection $G/B \onto G/P$ (studied in \cite{projected}),
we mention two ways this projection $Fl(n)\times Mat_n \onto Mat_n$
is unlike that one:
\begin{enumerate}
\item Not every compatibly split subvariety of $Mat_{n}$
  is the image of a stratum in $Fl(n)\times Mat_n$; for example,
  $\{X\, :\, Tr(X)=0\}$ is not such an image.
\item For some compatibly split strata $Z$ in $Mat_n$ that {\em are}
  images of compatibly split strata $\widetilde{Z}$ in $Fl(n)\times Mat_n$,
  there don't exist $\widetilde{Z}$ mapping birationally to
  (or even of the same dimension as) $Z$.
\end{enumerate}

Of particular interest are the point strata $\calY_{min} \iso S_n$,
of the form $(\pi B/B, 0)$, $\pi\in S_n$. Restricted to those,
our map $v$ gives an injection $S_n\to S_{2n}$ yet again denoted $v$,
with
\[
  \ell(v(\pi)) = \dim(Fl(n)\times Mat_n) = {n\choose 2} + n^2
\]
for all $\pi\in S_n$.

\begin{Theorem} \label{thm:main}
  The following properties hold for the poset $\calY$ of strata: 
  \begin{enumerate}
  \item This $\calY$ is a \defn{stratification by closed subvarieties},\footnote{%
      It is more usual to axiomatize stratifications using disjoint
      locally closed subvarieties, but (except for the reducedness
      requirement) this approach is equivalent and generally more convenient.}
    i.e., each $Y\in \calY$ is irreducible, and the scheme-theoretic intersection $Y_1\cap Y_2$ of
    two subvarieties $Y_1,Y_2 \in \calY$ is reduced and is a union of other $Z\in \calY$.
  \item 
    The image of $v\colon  \calY^{op} \into S_{2n}$ is an order ideal
    in the Bruhat order of $S_{2n}$, with maximal elements $\{v(\pi) \in S_{2n}\, :\, \pi \in S_n\}$.
  \item For $\pi \in S_{2n}$, let $\Gamma = \{\gamma \leq \pi\}$ be the maximal
    biGrassmannian elements below $\pi$, computable (as in \cite{kobayashi})
    from the ``Fulton essential set'' of $\pi$.
    Then $Y_\pi$ is the scheme-theoretic intersection
    $\bigcap_{\gamma\in \Gamma} Y_b$, i.e., to find the equations
    for a general stratum it is enough to understand the biGrassmannian case.
  \item After reducing these schemes (defined over $\ZZ$) modulo any prime $p$,
    there is a (unique) Frobenius splitting on $Fl(n)\times Mat_n$
    with respect to which $\calY$ consists of
    exactly the compatibly split subvarieties.
  \item Identify $Fl(n) \iso GL_n/B$, so $Fl(n)$ bears an open cover
    $\Union_{\pi\in S_n} \pi B_- B/B$ by translates of the big cell.  There is
    a ``Bruhat atlas'' on $Fl(n) \times Mat_n$ identifying
    $$ \pi B_-B/B \times Mat_n \iso X^{v(\pi)}_\circ
    := B_{2n} v(\pi) B_{2n}/B_{2n} \qquad\subseteq GL_{2n}/B_{2n} \qquad\quad\ \ $$
    as stratified spaces, i.e.,  taking
    \[
      Y_\rho \cap (\pi B_-B/B \times Mat_n)
      \stackrel{\sim}{\to} X^{v(\pi)}_\circ \cap X_\rho\quad \text{for all } \rho\leq v(\pi)\text{, where }X_\rho := \overline{B^-_{2n} \rho B_{2n}}/B_{2n}.
    \]
    Ergo, every stratum in $\calY$ is normal and Cohen--Macaulay, with
    rational singularities.
  \end{enumerate}
\end{Theorem}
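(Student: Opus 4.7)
The plan is to establish part (5)---the Bruhat atlas---first, and to deduce parts (1)--(4) from it together with standard facts about Schubert varieties in $GL_{2n}/B_{2n}$.

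On each chart $\pi B_-B/B \times Mat_n$ there is a canonical section $p \mapsto g(p)$ of $GL_n \to Fl(n)$: every flag $p$ in $\pi B_-B/B$ has a unique lift of the form $\pi u$ with $u$ lower-unitriangular. Using this I would define
\[
\Phi_\pi : \pi B_-B/B \times Mat_n \longrightarrow GL_{2n}/B_{2n}, \qquad (p,X) \longmapsto \begin{bmatrix} X & g(p) \\ w_0\, g(p)^{-1} & 0 \end{bmatrix} B_{2n}/B_{2n},
\]
and first verify that $\Phi_\pi$ is an open immersion onto $X^{v(\pi)}_\circ = B_{2n}\, v(\pi)\, B_{2n}/B_{2n}$. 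Both sides have dimension ${n \choose 2}+n^2 = \ell(v(\pi))$, so it is enough to produce an explicit inverse---the block structure of the matrix reads off $g(p)$ (hence $p$) from the right-hand columns modulo $B_{2n}$, and then $X$ from the left.

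The second step is to verify scheme-theoretic compatibility of stratifications, $\Phi_\pi^{-1}(X_\rho \cap X^{v(\pi)}_\circ) = Y_\rho \cap (\pi B_-B/B \times Mat_n)$. The underlying-set equality is essentially tautological, since $v$ is defined in Proposition \ref{prop:package} as precisely the $B^-_{2n}$-orbit position of $\Phi_\pi(p,X)$; the real content is matching the four families of rank conditions from Proposition \ref{prop:package} against the Fulton essential-set equations cutting out $X_\rho$. I expect this to be the main obstacle: set-theoretic agreement is forced, but ensuring that these rank conditions generate the full ideal of $Y_\rho$ on each chart---with no embedded components or nilpotents---requires careful bookkeeping relating the four blocks of $\Phi_\pi(p,X)$ to the flags on $\AA^{2n}$ used to define the essential set of $v(\pi)$.

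With the atlas established, the remainder is local-to-global bookkeeping on the known behavior of Schubert varieties. Irreducibility in (1) and the reducedness of the scheme-theoretic intersections follow from the corresponding facts for Richardson varieties $X_\rho \cap X^{v(\pi)}_\circ$ and their pairwise intersections. The order-ideal statement (2) is the standard non-emptiness criterion for Richardsons, with the identification of maximal elements coming from a length count. The biGrassmannian description (3) transports the Fulton--Kobayashi characterization \cite{kobayashi} of essential-set generators of Schubert ideals back via $\Phi_\pi$. The Frobenius splitting in (4) is inherited from the standard splitting of $GL_{2n}/B_{2n}$ whose compatibly split subvarieties are exactly the Schubert varieties; one checks that the local splittings inherited through the charts agree on overlaps (for instance via uniqueness of a splitting compatibly splitting a prescribed large subvariety), hence glue to a global splitting on $Fl(n)\times Mat_n$. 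Finally, normality, Cohen--Macaulayness, and rational singularities in (5) are local properties and thus transfer directly to each $Y_\rho$ from the corresponding properties of Richardson varieties.
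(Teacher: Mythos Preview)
Your overall architecture matches the paper's: establish the Bruhat atlas (5) first, then read off (1)--(4) from standard Schubert/Richardson facts. The paper likewise notes that (5) implies (1), (2), (4), and that (4) implies (3).

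There is, however, a genuine gap in your construction of the chart map. Your $\Phi_\pi$, sending $(p,X)$ to $\begin{bmatrix} X & g(p) \\ w_0\,g(p)^{-1} & 0\end{bmatrix} B_{2n}$, does \emph{not} land in the Bruhat cell $X^{v(\pi)}_\circ = B_{2n} v(\pi) B_{2n}/B_{2n}$. Take $n=2$, $\pi=e$, $X=0$, and $g(p)=u=\left[\begin{smallmatrix}1&0\\t&1\end{smallmatrix}\right]$: the resulting block matrix $\left[\begin{smallmatrix}0&u\\w_0 u^{-1}&0\end{smallmatrix}\right]$ lies in $B_4 v(e) B_4$ only when $t=0$, since any product $b\,v(e)\,b'$ with $b,b'\in B_4$ has upper-triangular top-right block, whereas $u$ is strictly lower. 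The map $v$ from Proposition~\ref{prop:package} records the $B^-_{2n}$-double coset, not the $B_{2n}$-one, so there is no reason your matrix should sit in the correct $B_{2n}$-orbit.

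The paper repairs this with a factorization trick (its Lemma~\ref{lem:factor}, from \cite{KWY}): writing $g=b_-b_+=c_+c_-$ with $b_\pm,c_\pm$ in $(\pi N^-\pi^{-1})\cap N^\pm$, one peels a $B^-_{2n}$-factor off the left and a $B_{2n}$-factor off the right of your matrix, leaving a middle matrix that genuinely lies in $X^{v(\pi)}_\circ$. This middle matrix is the actual chart map $c_\pi^{-1}$. A pleasant consequence is that what you flagged as the ``main obstacle''---the scheme-theoretic matching of strata---becomes immediate: the factorization shows your matrix and the paper's differ by $B^-_{2n}\times B_{2n}$, hence lie in the same opposite Schubert variety $X_\rho$, so the pullback of $X_\rho\cap X^{v(\pi)}_\circ$ is exactly $Y_\rho$ on the chart, with no separate ideal-theoretic bookkeeping needed.
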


In particular (5) implies (1), (2), (4), and (4) implies (3).
So (5) is really the key statement. We recall the definition of
Bruhat atlases in \S \ref{sec:bruhatlas}.

There had not previously been a full determination of the strata in
this stratification of $T^* GL_n/B$; see \cite{MvdK,MvdK2} for examples.
Other Frobenius splittings of cotangent bundles of flag varieties were
studied in \cite{KLT,Hague}.

The strata thus constructed are each invariant under the action of 
the subgroup $B$ (by standard action on $F^\bullet$ and conjugation on $X$).
Our other result characterizes those strata which are
$GL_n$-invariant. Recall that the \defn{diagram of a permutation}
$\sigma \in S_m$ is the set
\[
  D(\sigma) := \{(i,j) \in [m] \times [m] \mid \sigma(i)>j, \ \sigma^{-1}(j)>i\}
\]
and the \defn{Fulton essential set} of $\sigma$ is the Southeast corners
of $D(\sigma)$, i.e. the set of matrix entries
$(i,j)\in D(\sigma)$ such that $(i+1,j) \notin D(\sigma)$ and
$(i,j+1) \notin D(\sigma)$.

\begin{Proposition} \label{prop:G-invt} Let $Y$ be a stratum, let
  $\sigma=v(Y)$, and let $\rho$ be the partial permutation of size $n$
  given by the bottom right $n \times n$ submatrix of $\sigma$. The
  following are equivalent:
  \begin{enumerate}
  \item $Y$ is $GL_n$-invariant.
  \item The Fulton essential set of $\sigma$
    lies in the bottom right $(n+1) \times (n+1)$ submatrix.
  \item Neither $\sigma$ nor $\sigma^{-1}$ have descents in $\{1,\dots,n\}$.
  \item We have an isomorphism $GL_n \times^B \overline{B \rho^T w_0 B} \to Y$
    where the closure of $B \rho^T w_0 B$ is taken inside of $Mat_n$.
  \end{enumerate}
  Moreover, each $n\times n$ partial permutation $\rho$ 
  arises from a unique such (full) permutation $\sigma \in S_{2n}$.

  Two examples are of special note:
  $$
  v(Y_{GS}) = \underline{1\ 2 \cdots n\ 2n\ (2n-1)\cdots\ (n+1)} \quad
  v(Y_{Spr}) = \underline{1\ 2\ 3\cdots (n-1)\ 2n\ (2n-1) \cdots n}
  $$
  whose induced stratifications match those defined by
  Mehta and van der Kallen in \cite{MvdK} (recalled in \S \ref{sec:strat}).
\end{Proposition}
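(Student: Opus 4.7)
My plan is to prove the equivalences via the cycle $(3)\Leftrightarrow(2)\Rightarrow(1)\Leftrightarrow(4)\Rightarrow(2)$, then address the uniqueness of $\sigma$ from $\rho$ and the two distinguished examples. The equivalence $(2)\Leftrightarrow(3)$ will be purely combinatorial: each essential-set entry $(i,j)$ of $\sigma$ forces $\sigma(i)>j\geq\sigma(i+1)$, hence a descent of $\sigma$ at position $i$, and symmetrically a descent of $\sigma^{-1}$ at position $j$; conversely, every descent produces a nonempty row (resp.\ column) of $D(\sigma)$ whose SE corner lies in the essential set. So the essential set living in the bottom-right $(n+1)\times(n+1)$ submatrix translates exactly into the descent condition on $\sigma$ and $\sigma^{-1}$.

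For $(1)\Leftrightarrow(2)$, the key computation will be that the $GL_n$-action $h\cdot(gB/B,X)=(hgB/B,\,hXh^{-1})$ translates on the big $2n\times 2n$ matrix $M=\bigl[\begin{smallmatrix}X & g \\ w_0 g^{-1} & 0\end{smallmatrix}\bigr]$ as $M\mapsto \mathrm{diag}(h,I)\,M\,\mathrm{diag}(h^{-1},I)$. The rank of the NW $i\times j$ submatrix is preserved under every such conjugation iff $i\geq n$ and $j\geq n$, because only then do the $h$-transformations affecting the first $n$ rows and columns act on the submatrix's row and column spans by invertible maps. Since the essential-set entries index the minimal defining rank inequalities of the Bruhat stratum, $Y$ is $GL_n$-invariant iff all these entries lie in the invariant region.

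For $(1)\Leftrightarrow(4)$: if $Y$ is $GL_n$-invariant, its projection to $Fl(n)=GL_n/B$ is $GL_n$-equivariant and surjective, so $Y\cong GL_n\times^B Y_e$ where $Y_e$ is the $B$-conjugation-stable fiber over the basepoint $B/B$. Specializing the defining rank inequalities of $Y$ to $F^\bullet=E^\bullet$ and invoking (2), these become rank conditions on submatrices of $X$ whose rank function is dictated by the bottom-right $n\times n$ block $\rho$ of $\sigma$; I will identify the resulting matrix Schubert variety as $\overline{B\rho^T w_0 B}$, with the transpose and $w_0$ accounting for the appearance of $w_0 g^{-1}$ in the bottom-left of $M$.

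Finally, given $\rho$ the constraints of (3) reconstruct the remainder of $\sigma$ uniquely: positions $\{1,\dots,n\}$ receive an increasing sequence of values, and values $1,\dots,n$ appear in position order, so once $\rho$ fixes the bottom-right block the remaining entries are forced. The distinguished examples $Y_{GS}$ and $Y_{Spr}$ will be verified by computing $\rho$ directly from $X(F^i)\leq F^i$ and $X(F^i)\leq F^{i-1}$, yielding the antidiagonal $w_0$ and the superdiagonal shift matrix respectively; matching to the Mehta--van der Kallen stratifications then comes from comparing defining inequalities to \cite{MvdK}. The main obstacle I anticipate is the direction $(1)\Rightarrow(2)$: establishing that $GL_n$-invariance of the scheme $Y$ forces each individual essential-set rank condition to be $GL_n$-invariant (rather than only their joint cut-out being invariant) requires the minimality of the essential set among defining conditions---classical for matrix Schubert varieties after Fulton but deserving explicit justification in this embedded setting.
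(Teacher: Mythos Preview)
Your proposal is correct and follows essentially the same route as the paper. The paper proves $(1)\Leftrightarrow(2)$, $(2)\Rightarrow(3)$, $(3)\Rightarrow(2)$ (with the explicit reconstruction of $\sigma$ from $\rho$), $(1,2,3)\Rightarrow(4)$, and $(4)\Rightarrow(1)$, matching your implications; the only cosmetic difference is that for $(1)\Leftrightarrow(2)$ the paper phrases $GL_n$-invariance of a rank condition as ``does not involve the standard flag $E^\bullet$'' via Proposition~\ref{prop:package}, whereas you phrase it via the explicit conjugation $M\mapsto\mathrm{diag}(h,I)\,M\,\mathrm{diag}(h^{-1},I)$---these are the same observation, and both invoke Fulton's non-redundancy of the essential set for the delicate direction $(1)\Rightarrow(2)$, exactly as you anticipated.
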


\begin{remark}
  It would be of great interest if the conormal varieties to Schubert
  varieties could be made compatibly split with respect to some
  splitting on the cotangent bundle. Their union (inside $Y_{Spr}$) is
  given by the moment map equations ``the Southwest triangle of $X$
  vanishes''.  Unfortunately the conditions we impose on $X$ are {\em
    Northwest} rank conditions, so we have nothing to say about the
  conormal varieties.
\end{remark}

We comment briefly on the approach that suggested this space and this
Bruhat atlas. For a very long time we tried to cover $T^* GL_n/B$
itself with charts $X^\rho_\circ$ from the flag variety of a Kac--Moody group $H$.
This would require embedding the Mehta--van der Kallen poset as an
order ideal inside $W_H$. As far as we could tell, {\em that} would require
that $H$ have a rather fearsome Dynkin diagram, a sort of ``broom'' with
handle of length $n-2$ plus $n+1$ bristles
(nicely indexed by the covers of $v(Y_{Spr})$). It seems very hard to compute the defining equations 
of strata inside such general Kac--Moody flag varieties;
the closest work seems to be \cite{ElekLu}.
Much more recently we tried instead to embed the M--vdK poset (for small $n$)
as an order ideal inside $\{\rho \in S_m\colon \rho \geq \sigma\}$
for some fixed $\sigma$ with
$X_\sigma$ smooth (so that each $X^\rho_\circ \cap X_\sigma$ is a cell).
This succesful approach led to the discovery of $v(Y_{Spr})$.
That guess in turn prompted the
question of whether the ambient variety should be chosen larger than
$T^* GL_n/B$, larger by dimension $\ell(v(Y_{Spr})) = {n+1\choose 2}$,
and there was an obvious such choice.

\section{Stratifications and Frobenius splittings}
\label{sec:strat}

We start with three equivalent axiomatizations of ``stratification of $M$
by irreducible subvarieties''. The first and most traditional is a
finite disjoint decomposition $M = \coprod_{Y^\circ \in \calY^\circ} Y^\circ$
with the property that $\overline{Y^\circ} 
= \coprod_{Z^\circ \in \calY^\circ, Z^\circ \subseteq \overline{Y^\circ}} Z^\circ$.
We avoid using this definition because it is more pleasant to work
with closed subschemes rather than with locally closed subschemes.

The second, which is the one that we will use, is a finite collection $\calY$ of
subvarieties of $M$ with the property that
\[
  A,B\in \calY \implies A\cap B = \bigcup_{Z \in \calY,\, Z \subseteq A\cap B} Z.
\]
Already there is a subtlety compared to the first definition: should these intersections be required to be reduced?

We mention a third axiomatization of stratification: this is a finite
collection of subschemes $\calY^\union$ which is closed under union,
intersection, and taking geometric components.  This definition has
more of an ``algebra'' flavor, with two binary operations and a
(multi-valued) unary operation. One benefit of this point of view is
to allow for the definition of \defn{the stratification generated by a
  hypersurface} $H$, which amounts to decomposing $H$ into its
components, intersecting them with one another, and repeating those
two operations until one stops finding new varieties. There
is a new subtlety about reducedness, visible in the stratification of the
plane generated by the hypersurface $xy(x+y)=0$: should the
intersections of {\em unions} of strata be required to be reduced?

For the purposes of this paper, we will consider the most restrictive
form of these definitions for $\calY$ and require that any
intersection of closed subschemes in this stratification is reduced.
This will arise naturally if $M$ is endowed with a {\em Frobenius
  splitting}, whose definition we now recall from \cite[\S 1.3.1]{BK}.

If $R$ is a commutative algebra over $\mathbb{F}_p$, then a function
$\phi \colon R \to R$ is a \defn{Frobenius splitting} if, for all
$a,b \in R$, 
\begin{enumerate}
\item $\phi(a+b)=\phi(a)+\phi(b)$,
\item $\phi(a^p b) = a \phi(b)$, and
\item $\phi(1) = 1$.
\end{enumerate}

If $R$ admits a Frobenius splitting, then $R$ is easily seen to be
reduced: if $x \in R$ is nilpotent and non-zero, then let $m$ be the
largest power such that $x^m \ne 0$. Then $m \ge 1$ and $(x^m)^p=0$ and so
$0 = \phi(0) = \phi((x^m)^p) = x^m \phi(1) = x^m$, a contradiction.
One may extend $\phi$ uniquely to any localization $R[b^{-1}]$ by
$\phi(a/b) = \phi(a b^{p-1}/b^p) = \phi(a b^{p-1})/b$,
and this locality allows us to define splittings of schemes, not just rings
($\Longleftrightarrow$ affine schemes).

An ideal $I \subset R$ is \defn{compatibly split} if
$\phi(I) \subseteq I$. In that case, $\phi$ descends to a Frobenius
splitting on $R/I$ (hence $R/I$ is reduced, so $I$ is radical).
A nontrivial theorem (in what is still only a 3-page paper
\cite{KuM}, see also \cite{Schwede})
is that in a Frobenius split scheme of finite type,
the number of compatibly split subvarieties is finite.
It is easy to show they form a stratification in the most restrictive
of the three senses.

If $M$ and $H$ are both defined over $\ZZ$,
then one can transfer the corresponding reducedness results
to characteristic zero by standard spreading-out techniques.

In fact, we won't need to work carefully with this definition of $\phi$:
as we will see, the existence of a Bruhat atlas (defined in the next
section) guarantees that our space $M$ has a chart of affine spaces
such that the induced stratification on each is generated by a
hypersurface in the sense just explained.

In our situation, we can describe the irreducible components of the divisor of the stratification using the map $\nu$. More specifically, for $i=1,\dots,n$, the divisor labeled by $s_i \in S_{2n}$ is
\[
  Y_{s_i} = \{(F^\bullet, X) \mid \det(\text{NW $i \times i$ submatrix of $X$})=0\}.
\]
For $i=1,\dots,n-1$, the divisor labeled by $s_{n+i} \in S_{2n}$ is
\[
  Y_{s_{n+i}} = \{(F^\bullet, X) \mid \rank(F^{n-i} \xrightarrow{X} \AA^n/F^i) < n-i \}.
\]
In particular, every stratum can be obtained by the process of intersecting these divisors, taking irreducible components, intersecting with more divisors, and repeating.

Since $Y_{GS}$ is contained in each of the divisors of the second type, the induced stratification on $Y_{GS}$ that we get is generated by the intersections $Y_{GS} \cap Y_{s_i}$ for $i=1,\dots,n$. This matches with the stratification defined by \cite{MvdK}.

\section{The Bruhat atlas}\label{sec:bruhatlas}

We recall the definition of \emph{Bruhat atlas} due to X. He,
J.-H. Lu, and the first author \cite{Elek,GKL,H1,H2,LY,BH}.
A \defn{Bruhat chart} on a space $M$ with stratification $\calY(M)$
is a triple $(H,v_0,c)$ where
\begin{enumerate}
\item $H$ is a Kac--Moody group with standard Borel subgroups
  $B^\pm_H$ and Weyl group $W_H$,
\item $v_0$ is an element of $W_H$; it has an associated
  finite-dimensional \defn{Bruhat cell}
  \[
    X^{v_0}_\circ := B_H v_0 B_H / B_H
  \]
  of dimension $\ell(v_0)$, which is stratified by its
  intersection with $B^-_H$-orbit closures
  \[
    X_w := \overline{B^-_H w B_H}/ B_H,
  \]
\item $c \colon X^{v_0}_\circ \into M$ is an open embedding, and an isomorphism
  of stratified spaces with its image, i.e., the strata inside
  $c(X^v_\circ)$ are exactly the subsets $c(X^v_\circ \cap X_w)$ for $w\leq v$.
\end{enumerate}

In particular, each stratum in $M$ meeting the open image of $c$
receives a label $w\leq v_0$. If we can cover $M$ using charts, with
the labelings compatible across charts, we call this a \defn{Bruhat atlas}.
Unwrapping this discussion, a Bruhat atlas on $(M, \calY(M))$
is a triple
\[
  (H, v, \{c_p\}_{p \in \calY(M)_{min}})
\]
where
\begin{enumerate}
\item $H$ is a Kac--Moody group with standard Borel subgroups $B^\pm_H$
  and Weyl group $W_H$,
\item $\calY(M)_{min}$ is the set of minimal strata, each a point,
\item $v\colon \calY(M)^{op} \into W_H$ is a ranked embedding of posets,
  with image $\bigcup_{p \in \calY(M)_{min}} [1, v(p)]$,
\item for each $p \in \calY(M)_{min}$ we have an open embedding
  $c_p\colon  X^{v(p)}_\circ \into M$ which is an isomorphism (to its image) of
  $W_H$-stratified spaces. That is, for each stratum $Y \in \calY$,
  $c_p$ restricts to an isomorphism
  \[
    X^v_\circ \cap X_{v(Y)}  \xrightarrow{\cong} c_p(X^v_\circ) \cap Y,
  \]
\item these open images $\{c_p(X_\circ^{v(p)})\}$ cover $M$.
\end{enumerate}

This concept (foreshadowed in \cite{Snider,KWY}) serves as a very
efficient organizing principle for describing a stratified space and
properties of its strata. Here is a sample:

\begin{Proposition}\label{prop:atlas}
  Suppose that $(M, \calY(M))$ has a Bruhat atlas.
  \begin{enumerate}
  \item Each $Y \in \calY(M)$ has rational singularities (and, in
    particular, is normal and Cohen--Macaulay).
  \item The stratification $\calY(M)$ is generated by its hypersurface,
    i.e. it is the coarsest stratification by closed subvarieties
    (as defined in Theorem \ref{thm:main}(1)) that includes
    the components of the complement of the open stratum.

    Indeed, the induced stratification on any stratum $Y \in \calY(M)$
    likewise is generated by {\em its} hypersurface 
    $\bigcup\, \{ Z \in \calY(M)\colon Z \subsetneq Y\}$.
  \end{enumerate}
\end{Proposition}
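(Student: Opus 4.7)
The plan is to establish both properties locally in each atlas chart and then globalize via the stratified isomorphisms $c_p$.

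For (1), normality, Cohen--Macaulayness, and rational singularities are local properties preserved by open immersions, so it suffices to show, for each stratum $Y$, that $c_p(X^{v(p)}_\circ) \cap Y$ has these properties whenever nonempty. Via $c_p$ this intersection identifies with $X^{v(p)}_\circ \cap X_{v(Y)}$, an open subscheme of the Richardson variety in the flag variety of $H$ cut out by $v(p)$ and $v(Y)$. Richardson varieties (and their Kazhdan--Lusztig-type open subschemes) are classically known to be normal, Cohen--Macaulay, and to have rational singularities---in finite type by Brion's arguments, with the Kac--Moody extension handled via compatibly split Bott--Samelson resolutions. Patching across the atlas cover yields (1) for $Y$.

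For (2), I argue inside a single chart $X^{v(p)}_\circ$ and then transport along $c_p$. The codimension-one strata of $X^{v(p)}_\circ$ are exactly $\{X^{v(p)}_\circ \cap X_s : s \text{ a simple reflection of } W_H,\ s \leq v(p)\}$, whose union is the hypersurface. I claim every $X^{v(p)}_\circ \cap X_w$ with $w \leq v(p)$ is produced from these divisors by iterating intersection, taking irreducible components, and taking unions. The proof is by induction on $\ell(w)$: the base $\ell(w)=1$ is immediate. For $\ell(w) \geq 2$, fix a reduced expression $w = s_{i_1}\cdots s_{i_k}$ and set $w_1 = s_{i_2}\cdots s_{i_k}$, $w_2 = s_{i_1}\cdots s_{i_{k-1}}$, two Bruhat covers of $w$. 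Then $X_w \subseteq X_{w_1} \cap X_{w_2}$, and since $\dim X_{w_1} = \dim X_{w_2} = \dim X_w + 1$, any irreducible component of the intersection strictly containing $X_w$ would have to equal $X_{w_1}$ or $X_{w_2}$, forcing $X_{w_1} = X_{w_2}$. Provided $w_1 \neq w_2$ (arranged by choice of reduced expression), $X_w$ is therefore an irreducible component; by induction $X_{w_1}$ and $X_{w_2}$ are already in the generated stratification, so $X_w$ joins at the next step. Running the same induction on the interval $[v(Y), v(p)]$ delivers the refinement that each stratum $Y$ is itself generated by its own hypersurface. Finally, since the $c_p$ are stratified isomorphisms whose images cover $M$, these local generations assemble into the global statement for $\calY(M)$.

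The main obstacle is the combinatorial input for (2): ensuring that for every $w$ with $\ell(w) \geq 2$ one can exhibit two distinct Bruhat covers $w_1 \neq w_2$ below $w$, so that the dimension argument genuinely realizes $X_w$ as an irreducible component of $X_{w_1} \cap X_{w_2}$. When the naive left/right deletions from one reduced expression collapse to the same element, a different reduced expression (or a different pair of covers) must be used; this requires a small Coxeter-theoretic lemma asserting that every element of length at least two has at least two distinct down-covers. The remaining ingredients---the rationality input for Richardson varieties and the patching across charts---are standard.
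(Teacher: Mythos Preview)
Your proof is correct and follows essentially the same approach as the paper: for (1) you both reduce to the locality of rational singularities and the known results for Richardson/Kazhdan--Lusztig varieties, and for (2) you both argue chart-by-chart using a combinatorial property of Bruhat order and then globalize. The only difference is that the paper cites \cite[Theorem 2.3.1]{BK} for the ``generated by hypersurface'' property of the Bruhat stratification on $X^{v(p)}_\circ$, whereas you spell out the inductive argument explicitly (and correctly flag the needed Coxeter lemma that every element of length $\ge 2$ has at least two down-covers, which follows from thinness of Bruhat intervals).
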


\begin{proof}
  \begin{enumerate}
  \item This is true of each $X^v_\circ \cap X_w$,
    hence it is true of the open cover $\{ c_p(X^{v(p)}_\circ \cap X_{v(Y)}) \}$
    of $Y$, and these conditions are local so checkable on an open cover.
  \item Since the charts cover, each $Y\in \calY(M)$ meets some chart
    $(\pi N_- B/B)\times Mat_n$ in some open set $Y'$. 
    As seen in the proof of \cite[Theorem 2.3.1]{BK}, the stratification on
    any chart $X^{v(\pi)}_\circ$ has this generated-by-hypersurface property.
    The same proof, based on a combinatorial property of Bruhat order,
    extends to strata $X_w \cap X^{v(\pi)}_\circ$.
    \qedhere
  \end{enumerate}
\end{proof}

Concretely, this generation consists of taking the known
    compatibly split subvarieties, intersecting them with one another,
    picking out the components, and repeating until done.
    If we have a sequence of such operations on strata in
    $(\pi N_- B/B)\times Mat_n$ culminating in the stratum $Y'$ of
    $(\pi N_- B/B)\times Mat_n$, we can do the same with their closures
    in $Fl(n)\times Mat_n$, culminating in a union of $Y$ possibly with some
    other components (outside the chart). From there we pick out,
    and have ``generated'', $Y$. 

Fix $\pi\in S_n$ for the remainder of the section.
We recall \cite[Lemma 2.1]{KWY}:

\begin{Lemma}\label{lem:factor}
  Let $E^\pm := (\pi N^- \pi^{-1}) \cap N^\pm$. Then each $g \in \pi N^- \pi^{-1}$ can be uniquely factored
  as $g = b_- b_+ = c_+ c_-$ with $b_\pm,c_\pm \in E^\pm$, and the map
  \begin{eqnarray*}
    \Phi\colon \pi N^- \pi^{-1} &\to& E^+ \times E^- \\
    g &\mapsto& (b_+,c_-) 
  \end{eqnarray*}
  is an isomorphism of varieties.
\end{Lemma}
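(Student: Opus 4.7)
The plan is to analyze $U := \pi N^- \pi^{-1}$ via its root decomposition. Let $R^\pm$ denote the positive/negative roots of $GL_n$; then the Lie algebra of $U$ is $\pi \mathfrak{n}^- \pi^{-1} = \bigoplus_{\alpha \in \Psi} \mathfrak{g}_\alpha$, where $\Psi := \pi(R^-)$. Partitioning $\Psi = \Psi^+ \sqcup \Psi^-$ by intersection with $R^\pm$ identifies the Lie algebras of $E^\pm$ as $\bigoplus_{\alpha \in \Psi^\pm} \mathfrak{g}_\alpha$; in particular $E^+ \cap E^- = \{1\}$ since their Lie algebras lie in disjoint root spaces.

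First I would verify that $\Psi$ is closed under root addition: $R^-$ is obviously closed, $\pi$ is a bijection of $R$ compatible with addition, so $\Psi = \pi(R^-)$ is closed, and intersecting with the closed sets $R^\pm$ then shows $\Psi^\pm$ are closed as well (so $E^\pm$ really are algebraic subgroups). Then, invoking the standard fact that for a closed root subset equipped with a convex total order (all initial segments closed), the product-of-root-subgroups map is an isomorphism of varieties, I would use the two extreme orderings of $\Psi$ ($\Psi^+$ before $\Psi^-$, or the reverse) to obtain
\[
  \mu_1 : E^+ \times E^- \xrightarrow{\sim} U, \qquad \mu_2 : E^- \times E^+ \xrightarrow{\sim} U.
\]
These yield the asserted unique factorizations $g = c_+ c_- = b_- b_+$, and they exhibit $\Phi\colon g \mapsto (b_+, c_-)$ as a morphism of varieties (composite of $\mu_1^{-1}, \mu_2^{-1}$ with appropriate projections to $E^+$ and $E^-$).

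For the inverse, given $(u, v) \in E^+ \times E^-$, the conditions $b_+(g) = u$ and $c_-(g) = v$ force $g = b_- u = c_+ v$ for some $b_- \in E^-, c_+ \in E^+$, hence $c_+ v u^{-1} = b_- \in E^-$. Applying $\mu_1^{-1}$ to $v u^{-1} \in U$ factors it uniquely as $v u^{-1} = \alpha \delta$ with $\alpha \in E^+, \delta \in E^-$, and the only solution is $c_+ = \alpha^{-1}$, $b_- = \delta$, yielding the unique preimage $g := \delta u = \alpha^{-1} v$. Every step $(u, v) \mapsto v u^{-1} \mapsto (\alpha, \delta) \mapsto \delta u$ is a morphism, so $\Phi^{-1}$ is algebraic. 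The main obstacle is really the convex-ordering product decomposition for unipotent groups; once $\mu_1$ and $\mu_2$ are in hand, the rest is routine unipotent bookkeeping.
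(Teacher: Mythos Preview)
Your argument is correct. The paper does not actually prove this lemma; it simply quotes it as \cite[Lemma 2.1]{KWY}, so there is no in-paper proof to compare against.

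A couple of minor comments on your write-up. First, the convexity hypothesis on the ordering is stronger than necessary: for a $T$-stable unipotent group whose roots form a closed set, the product-of-root-subgroups map is an isomorphism of varieties for \emph{any} enumeration of the roots (see e.g.\ Springer, \emph{Linear Algebraic Groups}, \S 8.2). So you may simply take any order placing $\Psi^+$ before $\Psi^-$ (respectively after) without worrying about convexity of initial segments. Second, an equivalent but slightly slicker phrasing is to conjugate everything by $\pi^{-1}$, reducing to the standard decomposition $N^- = (N^- \cap \pi^{-1} N \pi)\,(N^- \cap \pi^{-1} N^- \pi)$, which is the textbook fact underpinning the cell structure of $G/B$. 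Your construction of $\Phi^{-1}$ via factoring $vu^{-1}$ through $\mu_1^{-1}$ is clean and exactly right.
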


\begin{proof}[Proof of Theorem~\ref{thm:main}(5).]
  We put co\"ordinates on $X^{v(\pi)}_\circ$, first as the free orbit of the group
  $$ N_{2n} \cap (v(\pi) N^-_{2n} v(\pi)^{-1})
  = \left\{    \begin{bmatrix} a & Y \\ 0 & d    \end{bmatrix} \ :\
    a \in N \cap \pi N^- \pi^{-1},\
    d \in N \cap w_0 \pi^{-1} N^- \pi w_0 \right\}
  $$
  through $v(\pi)B_{2n}/B_{2n}$, taking
  \begin{eqnarray*}
     \begin{bmatrix} a & Y \\ 0 & d    \end{bmatrix} \mapsto
  \begin{bmatrix} a & Y \\ 0 & d    \end{bmatrix} 
  \begin{bmatrix} 0 & \pi \\ w_0 \pi^{-1} & 0    \end{bmatrix} B_{2n}/B_{2n}
  &=&
  \begin{bmatrix} Y w_0\pi^{-1} & a\pi \\ d w_0 \pi^{-1} & 0    \end{bmatrix}
  B_{2n}/B_{2n} \\
  &=&
  \begin{bmatrix} Y w_0\pi^{-1} & \pi a' \\ w_0 d' \pi^{-1} & 0    \end{bmatrix}
  B_{2n}/B_{2n} 
  \end{eqnarray*}
  where $a' \in N^- \cap \pi^{-1}N \pi$, $d' \in N^- \cap \pi^{-1}N^-\pi$ are
  the co\"ordinates we will actually use.

  Meanwhile we can factor
  \begin{equation}\label{eq:factor}
    \begin{array}{rcl}
      \begin{bmatrix} X& \pi g \\ w_0 g^{-1} \pi^{-1} & 0\end{bmatrix}
  &=&
  \begin{bmatrix} \pi b_- \pi^{-1} & 0 \\ 0 & I\end{bmatrix}
  \begin{bmatrix} X'& \pi b_+ \\ w_0 g^{-1} \pi^{-1} & 0\end{bmatrix} \\ \\
  &=&
  \begin{bmatrix} \pi b_- \pi^{-1} & 0 \\ 0 & I\end{bmatrix}
  \begin{bmatrix} X''& \pi b_+ \\ w_0 c_-^{-1} \pi^{-1} & 0\end{bmatrix} 
  \begin{bmatrix} \pi c_+^{-1} \pi^{-1} & 0 \\ 0 & I\end{bmatrix}
    \end{array}
    \tag{$\ast$}
  \end{equation}
  where $b_- b_+ = c_+ c_- = g$ are the unique factorizations of $g\in N^-$ from
  Lemma \ref{lem:factor}, and $X'' = \pi b_-^{-1} \pi^{-1} X \pi c_+ \pi^{-1}$.
  We then have maps
  $$
  \begin{array}{cccrl}
  c_\pi^{-1}\colon& (\pi g, X) &\mapsto& 
  \begin{bmatrix} \pi b_-^{-1} \pi^{-1} X \pi c_+ \pi^{-1}
    & \pi b_+ \\ w_0 c_-^{-1} \pi^{-1} & 0\end{bmatrix} B_{2n}/B_{2n}
    \\ \\
                  & \left(\pi\, \Phi^{-1}(a',d'^{-1}),
                    \pi b_- \pi^{-1} Z \pi c_+^{-1} \pi^{-1}  \right)   &\mapsfrom&
  \begin{bmatrix} Z & \pi a' \\  w_0 d' \pi^{-1} & 0    \end{bmatrix}
                                                           B_{2n}/B_{2n}
  &\colon c_\pi    
  \end{array}
  $$
  giving inverse isomorphisms
  of $(\pi\, N^-B/B)\times Mat_n$ with $X^{v(\pi)}_\circ$,
  where in the second line $b_- := \Phi^{-1}(a',d'^{-1}) a'^{-1}$,
  $c_+ := \Phi^{-1}(a',d'^{-1}) d'$.
  
  To see that these are {\em stratified} isomorphisms, we have to compare
  the matrix $c_\pi^{-1}(\pi g, X)$ to the matrix
  $  \begin{bmatrix} X& \pi g \\ w_0 g^{-1} \pi^{-1} & 0\end{bmatrix}$
that  we used in \S\ref{sec:intro} to define the stratification.
  This is exactly accomplished in (\ref{eq:factor}), showing that the
  two lie in the same double coset $B_{2n}^- \dom GL_{2n} / B_{2n}$.  
\end{proof}

\section{The remaining proofs}\label{sec:proofs}

\newcommand{\tikzmarkb}[1]{\tikz[overlay,remember picture] \node (#1) {};}
\newcommand{\DrawBox}[3][]{     \tikz[overlay,remember picture]{
      \draw[red,#1] ($(left)+(#2 em,0.9em)$) rectangle
      ($(right)+(#3 em,.3 em)$);}  } 
        
\begin{proof}[Proof of Proposition~\ref{prop:package}.]
  Each $(i,j) \in [2n]^2$ determines a flush-Northwest submatrix
  with Southeast corner $(i,j)$, indicated in red in the matrices below,
  and a complete set of $B^-_{2n} \times B_{2n}$-invariants on $GL_{2n}$ is
  given by the ranks of these submatrices.  
  The four types of statistics correspond to the four quadrants
  in the $2n\times 2n$ matrix $v(gB/B,X)$.
  We write $V$ for $\AA^n$ in this proof, to better distinguish it
  from $V^*$.
  Suppose that $g$ represents the flag $F^\bullet \subset V$, i.e.,
  $F^i$ is the span of the first $i$ columns of $g$.
  The first $i$ rows of $w_0 g^{-1}$ (thought of as functionals on $V$
  in the natural way) span $ann(F^{n-i}) \leq V^*$. 

  \begin{minipage}{.17\linewidth}
    $
    \left[\begin{array}{*{13}{c}}        \tikzmarkb{left} \tikzmarkb{right}
            X & g \\ w_0 g^{-1} & 0  \end{array}\right] 
        \DrawBox[thick]{.4}{-1.2}
        $
      \end{minipage}
      \hfill
  \begin{minipage}{.8\linewidth}
    \parindent = 1em
    The $(i,j)$ in the Northwest quadrant are simplest, giving Northwest
    rank conditions on $X$ itself. These compute the first set
    of numbers, $\rank(E^i \into \AA^n \stackrel{X}{\to} \AA^n \onto \AA^n/E_j)$.
  \end{minipage}

  \begin{minipage}{.17\linewidth}
    $    \left[\begin{array}{cc}            \tikzmarkb{left}
            X & g \tikzmarkb{right} \\ w_0 g^{-1} & 0 \end{array}\right]
        \DrawBox[thick]{-1.2}{-.2}$
  \end{minipage}
  \hfill
  \begin{minipage}{.8\linewidth}
    \parindent = 1em
    Pick $i \le n$ and consider the NW $i \times (n+j)$
    submatrix. Then the rank of this matrix is
    $\rank(X \colon V \to V/(E_{n-i} + F^j))$ plus the rank of the
    last $j$ columns. This last quantity is the rank of the map
    $F^j \to V/E_{n-i}$, which is $j - \dim(F^j \cap E_{n-i})$.
  \end{minipage}

\begin{minipage}{.17\linewidth}
$ \left[\begin{array}{*{13}{c}}            \tikzmarkb{left}
            X & g \\ w_0 g^{-1} \tikzmarkb{right} & 0 \end{array}\right]
\DrawBox[thick] {-1.2}{-1.2}
$
\end{minipage}
  \hfill
  \begin{minipage}{.8\linewidth}
    \parindent = 1em
Note that
  $(V^*/ann(F^{i}))^* = F^{i}$. So by taking transpose, the rank of
  the NW $(n+i) \times n$ matrix is
  \[    i + \rank(X^* \colon V^* \to V^*/ann(F^{n-i})) = i + \rank(X
    \colon F^{n-i} \to V).  \]
  Similar remarks as above allow us to recover the rank of the NW
  $(n+i) \times j$ matrix from knowledge of
  $\rank(X \colon F^{n-i} \cap E^{j} \to V)$ and
  $\rank(id \colon E^j \to V/F^{n-i})$.

\end{minipage}

\vskip .1in

\begin{minipage}{.35\linewidth}
$\left[\begin{array}{*{13}{c}}            \tikzmarkb{left}
            X & g \\ w_0 g^{-1} & 0 \tikzmarkb{right} \end{array}\right]
\DrawBox[thick] {-1.2}{-.1}
\sim
\left[\begin{array}{*{13}{c}}            \tikzmarkb{left}
            g^{-1} X g & I \\ w_0  & 0 \tikzmarkb{right} \end{array}\right]
\DrawBox[thick] {-.1}{-.1}
$
\end{minipage}
  \hfill
  \begin{minipage}{.62\linewidth}
    \parindent = 1em
  Finally, we want to understand the rank of the NW
  $(n+i) \times (n+j)$ submatrix. For that, we can use column
  operations and row operations to say that its rank is
  $i+j + \rank(X \colon F^{n-i} \to V/F^j)$ (the $i$ is the contribution
  from the last $i$ rows, the $j$ is the contribution from the last
  $j$ columns, and the remainder is what happens when we reduce the NW
  $n \times n$ matrix). \qedhere
  \end{minipage}
\end{proof}

\begin{proof}[Proof of Proposition~\ref{prop:G-invt}]
  $(1) \Longrightarrow (2)$: The rank conditions given by the pairs in
  the essential set of $\sigma$ are non-redundant \cite[Lemma 3.14]{Fulton91},
  and hence none of them can be omitted. Now $Y$ is $G$-invariant if
  its defining conditions do not involve the standard flag $E$, and from the 
  proof of Proposition~\ref{prop:package}, this is equivalent to all 
  essential boxes being in the bottom right $(n+1) \times (n+1)$ submatrix. 
  This latter sentence also shows $(2) \Longrightarrow (1)$.

  $(2) \Longrightarrow (3)$: Suppose that $i$ is a descent of $\sigma$
  so that $\sigma(i)>\sigma(i+1)$. Then for some $j$ such that
  $\sigma(i+1) \le j < \sigma(i)$, the pair $(j,i)$ is in the
  essential set of $\sigma$.  In particular, if the essential set is
  contained in the bottom right $(n+1) \times (n+1)$ submatrix, then $i > n$. 
  Similarly, we see that $\sigma^{-1}$ has no descents amongst $1,\dots,n$.

  $(3) \Longrightarrow (2)$: 
  We address here the existence and uniqueness of $\sigma$ given $\rho$.
  Let $C,R \subseteq [n]$ be the set of columns
  and rows in $\rho$ that contain $1$s, so $|C|=|R|=\rank(\rho)$.
  Then it is easy to determine the rest of $\sigma$:
  \begin{equation} \label{eq:sigma}
    \sigma \quad=\quad 
    \kbordermatrix{ &|C|&n-|C|&&n \\
      |R|& I & 0 & \vline & 0 \\
      n-|R|& 0   & 0   &\vline& J \\ \hline
      n & 0 & K     &\vline& \rho    
    }
    \tag{$\ast\ast$}
  \end{equation}
  where $J$ is a partial permutation matrix with $1$s in columns
  $[n]\setminus C$ running NW/SE, and $K$ is similar, except with $1$s
  in rows $[n]\setminus R$ running NW/SE. From here we see that the
  diagram in the NW quadrant consists of the $(n-|C|)^2$ square in its
  SE corner, the diagram in the NE quadrant consists of columns touching
  the bottom row, and the diagram in the SW quadrant consists of rows 
  touching the right column.

  $(1,2,3) \Longrightarrow (4)$: 
  By $(1)$, the projection $Y\to Fl(n)$ is a $GL_n$-equivariant map to
  a homogeneous $GL_n$-space, hence determined by its fiber over $B/B$;
  specifically, the action map
  \[
    GL_n \times^B (Y\cap (\{B/B\} \times Mat_n ) \to Y
  \]
  is an isomorphism. We analyze $Y\cap (\{B/B\} \times Mat_n)$ 
  using $v:
   (B/B,X) \mapsto B^-_{2n} \begin{bmatrix}  X & I \\ w_0 & 0\end{bmatrix} B_{2n}.
  $
  Asking this to be in $\overline{B^-_{2n} \sigma B_{2n}}$ is
  (by (2) and \cite{Fulton91}) equivalent to asking that,
  for every $i,j \in [n]$, the rank of the NW $(n+i)\times (n+j)$ submatrix of
  $\begin{bmatrix} X & I \\ w_0 & 0\end{bmatrix}$ has rank bounded by that of 
  the corresponding submatrix of $\sigma$.
  Continuing \eqref{eq:sigma}, we break $\sigma$ into blocks as
  $$ 
  \kbordermatrix{ &|C|&n-|C|&&j & n-j\\
    |R|& I & 0 & \vline & 0 & 0\\
    n-|R|& 0   & 0   &\vline& J_1 & J_2 \\ \hline
    i & 0 & K_1     &\vline& \rho_1 & \rho_2      \\
    n-i & 0 & K_2    &\vline& \rho_3 & \rho_4 }
  \begin{array}{c}
    \text{with}\\ \text{ranks}
  \end{array}
  \kbordermatrix{ &|C|&n-|C|&&j & n-j\\
    |R|& |C| & 0 & \vline & 0 & 0\\
    n-|R| & 0   & 0   &\vline&  j-a-c & n-j-b-d \\ \hline
    i & 0 & i-a-b     &\vline& a & b      \\
    n-i & 0 & n-i-c-d    &\vline& c & d }.
  $$
  where $|C|=a+b+c+d$.
  Hence the NW $(n+i) \times (n+j)$ submatrix of $\sigma$ has rank 
  $$
  |C| + (j-a-c) + (i-a-b) + a  = (a+b+c+d) + (j-a-c) + (i-b) = d + j + i.
  $$
  Meanwhile, the corresponding submatrix 
  $  \begin{bmatrix}     X & {I_j\atop 0} \\ 0\ w_0^i & 0   \end{bmatrix}$
  has rank $i+j$ plus the rank of the SW $(n-j)\times (n-i)$ 
  submatrix of $X$.

  Together, $(B/B,X)$ is in $Y$ if and only if
  for each $i,j$,
  \begin{eqnarray*}
    rank(\text{SW $(n-j)\times (n-i)$ rectangle of }X)
    &\leq& rank(\text{SE $(n-i)\times (n-j)$ of }\rho) \qquad =d\\
    &=& rank(\text{SE $(n-j)\times (n-i)$ of }\rho^T) \\
    &=& rank(\text{SW $(n-j)\times (n-i)$ of }\rho^Tw_0)
  \end{eqnarray*}
  We have reached Fulton's equations for $\overline{B \rho^T w_0 B}$.

  $(4) \Longrightarrow (1)$: For any $B$-variety $Z$, $G \times^B Z$
  is always $G$-invariant.

  It remains to check the two examples, whose permutations we
  call $\sigma_{GS}$, $\sigma_{Spr}$. In each the southeast quadrant $\rho$
  of $v(Y)$ is easy to calculate, giving us the spaces $GL_n \times^B \lie{b}$ 
  and $GL_n \times^B \lie{n}$, respectively. Since (both here and 
  in \cite{MvdK}) the stratification of $Y_{Spr}$ is restricted from 
  the one on $Y_{GS}$, we need only check that we have the right
  stratification of $Y_{GS}$, which (thanks to Proposition \ref{prop:atlas} (2))
  is determined by its hypersurface.  

  The codimension $1$ strata inside $Y_{GS}$ correspond to the
  Bruhat covers $w \gtrdot \sigma_{GS}$. These come in two types:
  $\sigma s_i = s_i \sigma$ for $i<n$, and
  $\sigma \,\circ (n\leftrightarrow j)$ for $j \in [n+1,2n]$. The latter type are exactly the least upper bounds
  of $\sigma_{GS}$ and $s_{n}$. Consequently, the codimension $1$ strata
  come from the intersection of $Y_{GS}$ with $Y_{s_i}$ for $i\in [1,n]$.
  Since $Y_{s_i} = \{ (F^\bullet,X) \colon \det(\text{NW $i\times i$ minor of $X$}) = 0\}$,
  we have recovered the defining divisor from \cite[\S 3.4]{MvdK}.
\end{proof}

\end{document}